\documentclass[12pt]{amsart}

\setlength{\textwidth}{450pt}
\setlength{\oddsidemargin}{0pt}
\setlength{\evensidemargin}{0pt}
\parskip=6pt

\usepackage{verbatim, amssymb, enumitem, mathtools,graphicx}

\usepackage{soul}

\usepackage[breaklinks=true,colorlinks=true,linkcolor=blue,citecolor=red,urlcolor=blue,psdextra,pdfencoding=auto]{hyperref}
\allowdisplaybreaks

\renewcommand \a{\alpha}
\renewcommand \b{\beta}

\newcommand \la{\lambda}
\newcommand \ve{\varepsilon}

\newcommand \br{\mathbb{R}}
\newcommand \bc{\mathbb{C}}
\newcommand \bh{\mathbb{H}}
\newcommand \bo{\mathbb{O}}

\newcommand \Ker{\operatorname{Ker}}

\renewcommand \Re{\operatorname{Re}}
\newcommand \Span{\operatorname{Span}}
\newcommand \Tr{\operatorname{Tr}}
\newcommand \db{\partial}
\newcommand \SO{\mathrm{SO}}
\newcommand \Sp{\mathrm{Sp}}
\newcommand \SU{\mathrm{SU}}

\newcommand \Ff{\mathrm{F}_4}
\newcommand \Spin{\mathrm{Spin}}

\newcommand \cI{\mathcal{I}}
\newcommand \cB{\mathcal{B}}

\newcommand \cL{\mathcal{L}}

\newcommand \cq{\mathrm{q}}

\newcommand \sK{\mathsf{K}}
\newcommand \sS{\mathsf{S}}
\newcommand \sL{\mathsf{L}}

\newcommand \soo{\mathfrak{so}}
\newcommand \spg{\mathfrak{sp}}

\newcommand \f{\mathfrak{f}}

\newcommand \ri{\mathrm{i}}

\newcommand \Sym{\operatorname{Sym}}

\newcommand \Ad{\operatorname{Ad}}

\newcommand \<{\langle}
\renewcommand \>{\rangle}

\newtheorem{theorem}{Theorem}
\newtheorem*{theorem*}{Theorem}

\newtheorem*{corollary*}{Corollary}
\newtheorem*{conj*}{Conjecture}
\newtheorem{lemma}{Lemma}

\newtheorem*{prop*}{Proposition}

\theoremstyle{definition}

\newtheorem*{definition*}{Definition}

\theoremstyle{remark}

\newtheorem*{notation*}{Notation}
\newtheorem*{algorithm*}{Algorithm}
\newtheorem*{example*}{Example}

\makeatletter
\@namedef{subjclassname@2020}{%
  \textup{2020} Mathematics Subject Classification}
\makeatother

\begin{document}

\title[Killing tensors on symmetric spaces]{Quadratic Killing tensors on symmetric spaces which are not generated by Killing vector fields}

\author{Vladimir   S.  Matveev}
\address{Fakult\"{a}t f\"{u}r Mathematik und Informatik, Friedrich-Schiller-Universit\"{a}t, 07737 Jena, Germany}
\email{vladimir.matveev@uni-jena.de}
\thanks{The first named author was partially supported by ARC Discovery Grant DP210100951 and by the  DFG   projects 455806247 and 529233771.  The first  named author is thankful to La Trobe University for hospitality.}

\author{Yuri Nikolayevsky}
\address{Department of Mathematical and Physical Sciences, La Trobe University, VIC 3086, Australia} 
\email{Y.Nikolayevsky@latrobe.edu.au}
\thanks{The second named author was partially supported by ARC Discovery Grant DP210100951. The second named author is thankful to Friedrich-Schiller-Universit\"{a}t for hospitality.} %

\subjclass[2020]{53C35, 53B20, 37J30, 37J35, 70H06, 22E46}

\keywords{quadratic Killing tensor, symmetric space,   Cayley  projective plane, Quaternionic projective space}

\begin{abstract}
Every Killing tensor field on the space of constant curvature and on the complex projective space can be decomposed into the sum of  symmetric tensor products of Killing vector fields (equivalently, every polynomial in  velocities integral of the geodesic flow is a polynomial in the linear integrals). This fact led to the natural question on whether this property is shared by Killing tensor fields on all Riemannian symmetric spaces. We answer this question in the negative by constructing explicit examples of quadratic Killing tensor fields which are not quadratic forms in the Killing vector fields on the quaternionic projective spaces $\bh P^n, \, n \ge 3$, and on the Cayley projective plane $\bo P^2$.
\end{abstract}

\maketitle

\section{Introduction}
\label{s:intro}

A \emph{Killing tensor field} $K=K(x)_{i_1\dots i_d}$ of rank $d \ge 1$ on a Riemannian manifold $(M,ds^2=g_{ij}x^ix^j)$ is a symmetric tensor field that satisfies the Killing equation
\begin{equation}\label{eq:defK}
  K_{(i_1\dots i_d,j)}=0,
\end{equation}
where the comma denotes the covariant derivative and the parentheses denote the symmetrisation by all indices. This definition is equivalent to the fact that the function $\xi\in T_xM \mapsto K(x)_{i_1\dots i_d} \xi^{i_1} \cdots \xi^{i_d}$ polynomial in the velocities is an integral of the geodesic flow of $(M,ds^2)$: for any naturally parameterised geodesic $s \mapsto \gamma(s)$ of $(M,ds^2)$, the function $s \mapsto K(\gamma(s))_{i_1\dots i_d} (\dot{\gamma}(s))^{i_1} \dots (\dot{\gamma}(s))^{i_d}$ is constant.

Killing tensor fields of rank $d=1$ are called \emph{Killing \emph{(}co\emph{)}vector fields}. It is well known that a vector field is Killing if and only if the $1$-parametric group of diffeomorphisms of $M$ which it generates is a group of isometries.  To the best of our knowledge, no such geometric definition is available for Killing tensor fields of degree $d \ge 2$, though of course,  Killing tensors  correspond to the so-called hidden symmetries of the geodesic flow, that is, to the Hamiltonian vector fields which commute with the generator of the geodesic flow.

Denote $\sK^d(M)$ the space of Killing tensor fields of rank $d$ on $(M,ds^2)$. For every $d \ge 1$, the space $\sK^d(M)$ is finite dimensional. Moreover, the space $\sK(M) = \br \oplus \sK^1(M) \oplus \sK^2(M) \oplus \dots$ of all Killing tensor fields on $(M,ds^2)$ is an associative,  commutative algebra relative to the symmetric tensor product. In particular, the polynomial algebra $\sS(M)$ generated by Killing vector fields is a subalgebra of $\sK(M)$ (it should be noted that the homomorphism from the polynomial algebra generated by $\sK^1(M)$ to $\sS(M)$ may have a nontrivial kernel). We will call the elements of $\sS(M)$ \emph{decomposable}, and the elements of $\sK(M) \setminus \sS(M)$ \emph{indecomposable}. The fact that not every Killing tensor field of rank $d \ge 2$ is decomposable, even if one disregards the polynomials in the metric tensor, is well known (since at least Darboux): for example, a $2$-dimensional Liouville metric $ds^2=(\la(x)+\mu(y)) (dx^2 + dy^2)$ has quadratic Killing tensors not proportional to the metric, but in general, a trivial isometry group. On the other hand, any Killing tensor field on the space of constant curvature is decomposable~\cite{Tho,ST1, MMS}. This fact suggested the following question \cite[Question~3.9]{BMMT}: ``\emph{In a symmetric space, is every Killing tensor field decomposable?}" The answer to this question is known to be in the positive for the complex projective space~\cite[Corollary~5]{East}, \cite[Theorem~2.2]{ST2}, and one may expect that because symmetric spaces (especially of rank one) have such a large isometry group, any Killing tensor field on them must be decomposable.

Surprisingly, this is not true already for quadratic Killing tensor fields on all but one ``remaining" rank-one symmetric spaces, namely on the quaternionic projective spaces $\bh P^n=\Sp(n+1)/(\Sp(n)\Sp(1))$ with $n \ge 3$, and on the Cayley projective plane $\bo P^2 = \Ff/\Spin(9)$. We prove the following.

\begin{theorem} \label{t:hpn}
  For $n \ge 3$, the space $\sK^2(\bh P^n)$ contains a subspace \emph{(}in fact, an irreducible $\Sp(n+1)$-module\emph{)} of dimension $\frac16(n - 2)(n + 1)(2n + 1)(2n + 3)$ all of whose nonzero elements are indecomposable.
\end{theorem}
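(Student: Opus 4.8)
\emph{Identification of the module and the key principle.} The first thing I would do is recognise the target module. Writing $V=\bc^{2n+2}$ for the standard representation of $\Sp(n+1)$ and $\Lambda^4_0V\subset\Lambda^4V$ for its primitive part (the kernel of contraction with the symplectic form, equivalently the fundamental module of highest weight $\omega_4$), a standard computation gives $\dim\Lambda^4_0V=\binom{2n+2}{4}-\binom{2n+2}{2}=\tfrac16(n-2)(n+1)(2n+1)(2n+3)$. So the assertion is that $\sK^2(\bh P^n)$ contains a copy of $\Lambda^4_0V$ consisting of indecomposable tensors. The organising principle is that $\sS^2(\bh P^n)$ is an $\Sp(n+1)$-submodule of the $\Sp(n+1)$-module $\sK^2(\bh P^n)$ (the isometry group preserves the Killing equation, the grading, and the symmetric product). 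Hence, by Schur's lemma, any irreducible submodule $U\subset\sK^2$ whose isomorphism type does not occur in $\sS^2$ satisfies $U\cap\sS^2=0$, so every nonzero element of $U$ is indecomposable. The proof therefore splits into two independent tasks: (i) $\Lambda^4_0V$ does not occur as a constituent of $\sS^2$; and (ii) $\Lambda^4_0V$ does occur in $\sK^2$.

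\emph{Task (i): absence from the decomposable part.} Since $\sK^1(\bh P^n)\cong\g=\spg(n+1)$ and $\sS^2$ is the image of the equivariant product map $\Sym^2\sK^1\to\sK^2$, every constituent of $\sS^2$ is a constituent of $\Sym^2\g$. Using $\g\cong\Sym^2V$ as $\Sp(n+1)$-modules (after complexifying), I would compute the plethysm $\Sym^2(\Sym^2V)\cong S_{(4)}V\oplus S_{(2,2)}V$ and then restrict to $\Sp(n+1)$ by Littlewood's rule. Every constituent $V_\mu$ that appears satisfies $\mu\subseteq(4)$ or $\mu\subseteq(2,2)$, so $\mu$ has at most two rows; in particular the four-row diagram $(1,1,1,1)$ of $\Lambda^4_0V$ cannot occur. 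Here $n\ge3$ guarantees both that $\Lambda^4_0V$ is a genuine irreducible module and that Littlewood's rule applies without correction, as $|\lambda|=4\le n+1$. This settles (i).

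\emph{Task (ii): an explicit copy of $\Lambda^4_0V$ in $\sK^2$.} This is the heart of the matter. I would use the quaternionic K\"ahler structure $TM\otimes\bc\cong\cV\otimes\cH$, where $\cH$ is the rank-$2$ bundle associated to the standard $\Sp(1)$-module and $\cV$ the rank-$2n$ bundle associated to the standard $\Sp(n)$-module, each carrying a parallel symplectic form, with the metric corresponding to the product of these forms. Consequently $\Sym^2(TM\otimes\bc)\cong(\Sym^2\cV\otimes\Sym^2\cH)\oplus(\Lambda^2\cV\otimes\Lambda^2\cH)$ with $\Lambda^2\cH$ trivial, so sections of $\Lambda^2\cV$ are symmetric $2$-tensors. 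Given $\eta\in\Lambda^4_0V$, regarded as a constant section of the trivial bundle $\Lambda^4\cE$ associated to $V$, I would define $K_\eta$ pointwise: at $x$ the fibre splits as $\cE_x=\cV_x\oplus\cH_x$, and I project $\eta$ onto the summand $\Lambda^2\cV_x\otimes\Lambda^2\cH_x\cong\Lambda^2\cV_x$ to obtain $\tilde\eta_x\in\Lambda^2\cV_x$; then $K_\eta:=\tilde\eta\otimes\omega_{\cH}$, viewed in $\Lambda^2\cV\otimes\Lambda^2\cH\subset\Sym^2(TM\otimes\bc)$, with a real structure imposed on $\eta$ so that $K_\eta$ is real. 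The assignment $\eta\mapsto K_\eta$ is manifestly $\Sp(n+1)$-equivariant and nonzero, hence injective on the irreducible module $\Lambda^4_0V$, so it only remains to verify that each $K_\eta$ satisfies the Killing equation.

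\emph{The main obstacle.} Verifying $K_{(ij,k)}=0$ is the crux, and is where I expect the real work to lie. Because $\eta$ is parallel for the canonical flat connection on $\Lambda^4\cE$ while the splitting $\cE=\cV\oplus\cH$ is parallel only for the Levi-Civita connection, the covariant derivative $\nabla K_\eta$ is governed entirely by the second fundamental form of this splitting, that is, by the Clifford-type $\m$-action of the symmetric pair $(\spg(n+1),\spg(n)\oplus\spg(1))$. I would reduce the Killing identity to a single $\Sp(n)\Sp(1)$-equivariant algebraic identity at the base point, expressing the symmetrised derivative of the $(2,2)$-projection through $[\m,\cdot]$, and then check that it forces the totally symmetrised first derivative to vanish. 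An attractive shortcut, which I would pursue in parallel as a cross-check, is to show that this symmetrisation lands in a subbundle of $\Sym^3T^*M$ in which the type $\Lambda^4_0V$ does not occur; then the equivariant first-order operator $\eta\mapsto K_{(ij,k)}$ must vanish by Schur, which reduces the verification to a branching computation for $\Sym^3\m$.
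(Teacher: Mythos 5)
Your identification of the target module as $R(\pi_4)=\Lambda^4_0(\bc^{2n+2})$, your dimension count, and your reduction of the problem to the two tasks (i) and (ii) via Schur's lemma all match the paper's strategy. Your task (i) is correct and is in fact a cleaner route to the same conclusion: the plethysm $\Sym^2(\Sym^2 V)\cong S_{(4)}V\oplus S_{(2,2)}V$ followed by Littlewood restriction yields exactly the paper's decomposition $\Sym^2(\Ad)=R(4\pi_1)+R(2\pi_2)+R(\pi_2)+1$, and the two-row observation rules out $R(\pi_4)$ at a glance.

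The genuine gap is in task (ii): you never prove that your tensors $K_\eta$ satisfy the Killing equation. You correctly flag this as ``the crux'' and then offer only two candidate strategies --- an explicit computation with the second fundamental form of the splitting $\cE=\cV\oplus\cH$, and a Schur-type vanishing argument requiring that $R(\pi_4)$ not occur in the sections of $\Sym^3 T^*M$ --- without carrying either out. Neither is routine: the first requires a nontrivial $\Sp(n)\Sp(1)$-equivariant identity, and the second requires a branching computation (by Frobenius reciprocity, the multiplicity of $R(\pi_4)$ in $\Gamma(\Sym^3T^*M)$ equals that of $R(\pi_4)|_{\Sp(n)\Sp(1)}$ in $\Sym^3\m$, which is not obviously zero). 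Since the entire content of the theorem is the existence of these Killing tensors, this omission is fatal as the proof stands. The paper avoids the difficulty by working upstairs on $S^{4n+3}$: for $S$ a symmetric operator commuting with the quaternionic structure it sets $T_S(X,X,P,P)=\sum_{\a=1}^3\<SJ_\a X,P\>^2$, checks $\Sp(1)$-invariance so that $T_S$ descends through the Hopf fibration, and then the Killing property is a one-line computation because geodesics of $\bh P^n$ are projections of great circles $\gamma(s)=(\cos s)X+(\sin s)P$, along which $T_S$ is manifestly constant. Note also that the paper's map is defined on $\Sym^2(W_{n+1})$ rather than on an irreducible module, so it must additionally prove an injectivity lemma (its Lemma~1, verified partly by computer algebra for $n=3$ and by induction for $n\ge4$); your approach, if the Killing verification were supplied, would bypass that lemma entirely since a nonzero equivariant map out of the irreducible $\Lambda^4_0V$ is automatically injective. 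As it stands, though, the construction half of the theorem is asserted rather than proved.
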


\begin{theorem} \label{t:op2}
  The space $\sK^2(\bo P^2)$ contains a subspace \emph{(}in fact, an irreducible $\Ff$-module\emph{)} of dimension $26$ all of whose nonzero elements are indecomposable.
\end{theorem}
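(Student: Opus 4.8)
The plan is to reduce the statement to a comparison of $\Ff = \mathrm{F}_4$-modules and then to carry that comparison out. Write $G = \Ff$, $H = \Spin(9)$, so that $\bo P^2 = G/H$, and recall that the Killing vector fields are precisely the fundamental fields of the $G$-action, whence $\sK^1(\bo P^2) \cong \g = \f_4$ is the adjoint module (dimension $52$). The decomposable quadratic tensors form the degree-two part $\sS^2 := \sS(\bo P^2) \cap \sK^2(\bo P^2)$, which is the image of the symmetric-product map $\Sym^2(\f_4) \to \sK^2(\bo P^2)$ and hence a $G$-submodule. The observation that makes the word ``all'' in the statement accessible is purely representation-theoretic: if $V \subseteq \sK^2(\bo P^2)$ is an irreducible $G$-submodule with $V \not\subseteq \sS^2$, then $V \cap \sS^2$, being a submodule of the irreducible $V$, must be $0$; consequently every nonzero element of $V$ avoids $\sS^2$ and is indecomposable. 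It therefore suffices to produce inside $\sK^2(\bo P^2)$ an irreducible $26$-dimensional $G$-submodule that is not contained in $\sS^2$.

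The candidate is the fundamental module $V_{\omega_4}$ of dimension $26$ attached to the short-root node of $\Ff$ (the trace-free part of the exceptional Jordan algebra $H_3(\bo)$). The first, and main, task is to locate a copy of it in $\sK^2(\bo P^2)$. I would compute $\sK^2(\bo P^2)$ as a $G$-module using the homogeneity and symmetry of $\bo P^2$: writing $\Sym^2 T^*(\bo P^2) = G \times_H \Sym^2\m^*$ with $\m = T_o(\bo P^2)$ the isotropy module, the Killing tensors form the finite $G$-submodule of $\Gamma(\Sym^2 T^*(\bo P^2))$ cut out by the Killing equation. Equivalently, since $\bo P^2$ is symmetric the prolongation of the Killing equation has parallel curvature, so $\sK^2(\bo P^2)$ is identified $H$-equivariantly with the subspace of the prolongation fibre annihilated by the (algebraic, parallel) curvature endomorphisms, and the resulting $G$-module structure is recovered by branching through $H$. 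Carrying this out with the explicit octonionic curvature tensor of $\bo P^2$ and the branching $\f_4 \downarrow \soo(9)$ (here $\m \cong \br^{16}$ is the real spin module of $\Spin(9)$), I would decompose $\sK^2(\bo P^2)$ into $G$-irreducibles and check that $V_{\omega_4}$ occurs, with multiplicity one.

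To see that this $26$-dimensional module escapes the decomposable part, it suffices to show that $V_{\omega_4}$ is not even a constituent of the source $\Sym^2(\f_4)$, because $\sS^2$ is a quotient of $\Sym^2(\f_4)$. A Weyl-character computation gives $\Sym^2(\f_4) \cong \mathbf 1 \oplus \mathbf{324} \oplus \mathbf{1053}$ (total dimension $\tfrac12 \cdot 52 \cdot 53 = 1378$), in which the $26$-dimensional module does not appear; here the trivial summand $\mathbf 1$ is the Killing form, corresponding at the level of tensors to the metric. Hence $V_{\omega_4} \cap \sS^2 = 0$, and by the reduction above every nonzero element of this $26$-dimensional submodule of $\sK^2(\bo P^2)$ is indecomposable, which is the assertion of the theorem.

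The hard part is the first task of the second paragraph: establishing that $V_{\omega_4}$ genuinely occurs in $\sK^2(\bo P^2)$ — that the relevant initial data integrate to a bona fide solution of the Killing equation rather than merely living in the formal prolongation. This is exactly where the special geometry of $\bo P^2$ enters, as the explicit parallel curvature tensor built from octonionic multiplication and the $\Spin(9)$-module structure of $\m$ must be used to evaluate the curvature (integrability) obstruction and to pin down the multiplicity of $\mathbf{26}$. Once $\sK^2(\bo P^2)$ is known as a $G$-module, the comparison with $\Sym^2(\f_4)$ and hence with $\sS^2$ is immediate, and the irreducibility of $V_{\omega_4}$ delivers indecomposability of all its nonzero elements at once.
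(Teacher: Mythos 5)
Your overall architecture is sound and matches the paper's: find an irreducible $26$-dimensional $\Ff$-submodule of $\sK^2(\bo P^2)$, observe that the decomposable quadratic Killing tensors form a quotient of $\Sym^2(\f_4)=\mathbf 1\oplus\mathbf{324}\oplus\mathbf{1053}$ which contains no copy of $\mathbf{26}$, and conclude by irreducibility that the submodule meets the decomposables only in $0$. That last comparison is exactly the paper's $\Sym^2(\Ad)=R(2\pi_4)+R(2\pi_1)+1$ versus $R(\pi_1)$, and your Schur-type reduction (an irreducible submodule not contained in $\sS^2$ intersects it trivially) is correct.

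The genuine gap is that the central step --- actually exhibiting the $\mathbf{26}$ inside $\sK^2(\bo P^2)$ --- is never carried out; you yourself flag it as ``the hard part'' and leave it as a program (compute the prolongation of the Killing equation, evaluate the parallel curvature obstruction on the fibre, branch $\f_4\downarrow\soo(9)$ through the $16$-dimensional spin module, read off the multiplicity of $V_{\omega_4}$). As written this is a plan for a substantial computation, not a proof: without it you have not produced a single nonzero Killing tensor outside $\sS^2$, which is the entire content of the theorem. The paper avoids this machinery with an explicit construction: polarizing the determinant of the Albert algebra $H_3(\bo)$ gives a symmetric trilinear form $\Phi$, and for $A$ in the $26$-dimensional trace-free hyperplane $V$ one sets $K_A(Y,Z)=\Phi(Y,Z,A)$ on tangent vectors to $\bo P^2\subset H_3(\bo)$. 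A one-point evaluation shows $A\mapsto K_A$ is nonzero, so Schur's Lemma gives an irreducible $26$-dimensional module; and since $\Ff$ acts transitively on geodesics, the Killing property reduces to checking constancy of $K_A(\dot\gamma,\dot\gamma)$ along one model geodesic, where a direct computation gives the constant $-\tfrac13 r_3$. If you want to complete your argument along your own lines you must either carry out the prolongation/branching computation in full, or replace it with some such explicit construction; until then the existence claim on which everything rests is unsupported.
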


In the paper which is currently in preparation we will show that the quadratic Killing tensors on $\bh P^2$ are decomposable, and that $\sK^2(\bh P^n)$ with $n \ge 3$ (respectively, $\sK^2(\bo P^2)$) is the direct sum of the module of decomposable quadratic Killing tensors and the module from Theorem~\ref{t:hpn} (respectively, from Theorem~\ref{t:op2}). We will also establish a connection between the algebras of Killing tensors on dual irreducible symmetric spaces; this will enable one to extend the above results to noncompact rank-one symmetric spaces.

The proofs of both theorems follow the same scheme: we first construct the required submodule explicitly, and then check that the decomposition of the module of quadratic decomposable Killing tensor fields (which is isomorphic to the $G$-module $\Sym^2(\Ad_G)$ for $G=\Sp(n+1)$ and $G = \Ff$) does not contain that submodule.

\section{Quaternionic projective space. Proof of Theorem~\ref{t:hpn}}
\label{s:hpn}

We equip the space $\br^{4n+4}$ with the quaternionic structure defined by three anticommuting Hermitian structures $J_1, J_2$ and $J_3=J_1J_2$. 

We denote $\spg(1)=\Span(J_1,J_2,J_3)$ and define $\spg(n+1) \subset \soo(4n+4)$ to be the centraliser of $\spg(1)$ in $\soo(4n+4)$. Furthermore, let $V_{n+1}$ be the space of symmetric operators in $\br^{4n+4}$ which commute with $\spg(1)$. Then $V_{n+1}$ is an $\spg(n+1)$-module, which can be decomposed as $V_{n+1} = \br I_{4n+4} \oplus W_{n+1}$, where $W_{n+1}$ is the subspace of $V_{n+1}$ consisting of operators with trace zero. The $\spg(n+1)$-module $\br I_{4n+4}$ is trivial, and the $\spg(n+1)$-module $W_{n+1}$ (of dimension $n(2n+3)$) is irreducible (it can be viewed as the tangent space of the irreducible symmetric space $\SU(2n+2)/\Sp(n+1)$ at the projection of the identity).

As $V_{n+1}$ commutes with $\spg(1)$, all the operators $SJ_\a,\; S \in V_{n+1}, \a=1,2,3$, are skew-symmetric. For $S \in V_{n+1}$, we define the constant tensor $T_S$ of type $(0,4)$ on $\br^{4n+4}$ by
\begin{equation} \label{eq:hTS}
    T_S(X,Y,P,Q)=\frac12 \sum_{\a=1}^{3} (\<SJ_\a X, P\>\<SJ_\a Y, Q\>+\<SJ_\a X, Q\>\<SJ_\a Y, P\>),
\end{equation}
where $X,Y,P,Q \in \br^{4n+4}$, so that $T_S(X,X,P,P)=\sum_{\a=1}^{3} \<SJ_\a X, P\>^2$. It is easy to see that the tensor $T_S$ is symmetric in the first two arguments and in the second two arguments, and satisfies the first Bianchi identity by any three arguments (this follows from the fact that $SJ_\a$ are skew-symmetric).

 Denote $\pi: S^{4n+3} \to \bh P^n$ the natural projection (which is a Riemannian submersion) from the unit sphere $S^{4n+3} \subset \br^{4n+4}$ along the fibers of the Hopf fibration defined by the orbits of the action of $\Sp(1)$ on $S^{4n+3}$.

We claim that for any $S \in V_{n+1}$, the tensor $T_S$ defined by~\eqref{eq:hTS} is $\Sp(1)$-invariant, and hence descends under $\pi$ to a quadratic tensor field on $\bh P^n$, that is, for $X \in S^{4n+3}$ and $P \in T_X S^{4n+3}$, the quadratic tensor field $F_S$ on $\bh P^n$ defined by
\begin{equation} \label{eq:FS}
F_S(\pi(X),\pi(X),(d\pi)_X P,(d\pi)_X P)=T_S(X,X,P,P)
\end{equation}
is well-defined. To see this, given $S \in V_{n+1}$ and $\beta=1,2,3$, we compute the Lie derivative $\cL_{v_\b}T_S$ of the tensor field $T_S$ along the vector field $v_\b(X) = J_\b X$ on $\br^{4n+4}$. We obtain $(\cL_{v_\b}T_S)(X,X,P,P)= 2 \sum_{\a=1}^{3} \<[SJ_\a,J_\b] X, P\>\<SJ_\a X, P\> = 2 \sum_{\a=1}^{3} \<S[J_\a,J_\b] X, P\>$ $\<SJ_\a X, P\>$, as $S$ commutes with $J_\b$, and so $(\cL_{v_\b}T_S)(X,X,P,P)=0$, as for $\a \ne \b$ we have $[J_\a,J_\b]=2\ve J_\gamma$, where $\ve$ is the sign of the permutation $(\a,\b,\gamma) \mapsto (1,2,3)$. It follows that $\cL_{v_\b}T_S=0$, for all $\b=1,2,3$ and all $S \in V_{n+1}$, which proves our claim.

Furthermore, the quadratic tensor field $F_S$ defined by~\eqref{eq:FS} is Killing. To prove this, we note that the geodesics of $\bh P^n$ are the projections of the geodesics of $S^{4n+3}$ under $\pi$. For a geodesic $\gamma(s) = (\cos s) X + (\sin s) P$ parameterised by the arc length $s$, where $X, P \in \br^{4n+4}$, with $\|X\|=\|P\|= 1, \, X \perp P$, we have $T_S(\gamma(s),\gamma(s),\dot{\gamma}(s), \dot{\gamma}(s)) =\sum_{\a=1}^{3} \<SJ_\a X, P\>^2$, which does not depend on $s$.

The map $F:V_{n+1} \to \sK^2(\bh P^n),\; S \mapsto F_S$, defined by~\eqref{eq:FS} extends to the linear map $\Phi: \Sym^2(V_{n+1}) \to \sK^2(\bh P^n)$ from the space of quadratic forms on $V_{n+1}$ to $\sK^2(\bh P^n)$ (where we identify the symmetric square $\Sym^2(V_{n+1})$ with its dual using the natural inner product in the space of tensors). Relative to a basis $\cB=\{S_A\}, \, A=1, \dots, n(2n+3)+1$, for $V_{n+1}$, the map $\Phi$ is given by
  \begin{equation}\label{eq:PhiSym}
    (\Phi(Q))(\pi(X),\pi(X),(d\pi)_X P,(d\pi)_X P)= \sum_{A,B} Q_{AB} \Big(\sum_{\a=1}^{3} \<S_A J_\a X, P\>\<S_B J_\a X, P\>\Big),
  \end{equation}
  where $Q \in \Sym^2(V_{n+1})$ has components $Q_{AB}=Q_{BA}$ relative to the basis $\cB$ for $V_{n+1}$, and $X, P \in \br^{4n+4}$.

It is easy to see that $\Phi$ is a homomorphism of $\Sp(n+1)$-modules. However, it is not injective. Define the linear form $\Tr: V_{n+1} \to \br$ by $S \mapsto \Tr S$ for $S \in V_{n+1}$. We prove the following.
{
\begin{lemma} \label{l:Phiinj}
  Suppose $n \ge 3$. Let $Q \in \Sym^2(V_{n+1})$. Then $Q \in \Ker \Phi$ if and only if $Q$ is divisible by $\Tr$.
\end{lemma}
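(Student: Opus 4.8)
The plan is to prove both implications after reducing to a statement about the restriction of $\Phi$ to $\Sym^2(W_{n+1})$. First I would dispose of the ``if'' direction. Since the natural inner product identifies the linear form $\Tr$ with $I_{4n+4}\in V_{n+1}$, a quadratic form divisible by $\Tr$ is precisely an element of the subspace $I_{4n+4}\odot V_{n+1}\subset\Sym^2(V_{n+1})$, and for such a form it suffices to check $\Phi(I_{4n+4}\odot S)=0$ for every $S$. This is immediate from \eqref{eq:PhiSym}: if $P$ is horizontal, i.e. $P\perp X,J_1X,J_2X,J_3X$, then $\<J_\a X,P\>=0$ for $\a=1,2,3$, so every summand carrying the factor $\<I_{4n+4}J_\a X,P\>=\<J_\a X,P\>$ vanishes. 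Because $\Phi$ is a homomorphism of $\Sp(n+1)$-modules, $\Ker\Phi$ is a submodule; writing $V_{n+1}=\br I_{4n+4}\oplus W_{n+1}$ orthogonally gives $\Sym^2(V_{n+1})=(I_{4n+4}\odot V_{n+1})\oplus\Sym^2(W_{n+1})$, and since $\Phi$ already annihilates the first summand, the ``only if'' direction is \emph{equivalent} to the injectivity of $\Phi|_{\Sym^2(W_{n+1})}$ (if $Q=Q_1+Q_2$ with $Q_1\in I_{4n+4}\odot V_{n+1}$ and $Q_2\in\Sym^2(W_{n+1})$, then $\Phi(Q)=\Phi(Q_2)$).

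The key computational simplification I would record next is a closed form for the bilinear expression in \eqref{eq:PhiSym}. Identifying $\br^{4n+4}=\bh^{n+1}$ with $J_1,J_2,J_3$ acting as left multiplication by $i,j,k$, the elements of $V_{n+1}$ are the quaternionic Hermitian matrices; for $S\in V_{n+1}$ set $Z_S=\sum_b P_b\,\overline{(SX)_b}\in\bh$. Using $SJ_\a=J_\a S$ together with the quaternion relations one finds that $\<SJ_\a X,P\>$ equals $(Z_S)_\a$, the coefficient of the $\a$-th imaginary unit, and hence
\[
  \sum_{\a=1}^3\<SJ_\a X,P\>\<S'J_\a X,P\>=\<\Im Z_S,\Im Z_{S'}\>,\qquad S,S'\in V_{n+1}.
\]
This exhibits each form $\Psi_{X,P}:=\sum_\a\<\,\cdot\,J_\a X,P\>\<\,\cdot\,J_\a X,P\>$ as the pullback, under the linear map $S\mapsto\Im Z_S\in\br^3$, of the standard form on $\br^3$; it makes the action of $\Phi$ completely explicit and re-proves the vanishing on $I_{4n+4}\odot V_{n+1}$, since $\Im Z_{I_{4n+4}}=0$ for horizontal $P$.

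To establish injectivity on $\Sym^2(W_{n+1})$ I would use the decomposition into irreducible $\Sp(n+1)$-modules. With $W_{n+1}\cong\Lambda^2_0\bc^{2n+2}$ (the isotropy module of $\SU(2n+2)/\Sp(n+1)$), a Littlewood restriction computation shows that for $n\ge 3$ --- the stable range for the partition $(2,2)$, where the summand $\Lambda^4_0\bc^{2n+2}$ of highest weight $\varpi_4$ is present --- the module is multiplicity free, $\Sym^2(W_{n+1})=V_{2\varpi_2}\oplus V_{\varpi_4}\oplus V_{\varpi_2}\oplus V_0$ (a dimension check gives $308+42+27+1=378$ for $n=3$). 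This is exactly where the hypothesis $n\ge 3$ enters: for $n=2$ the weight $\varpi_4$ does not occur and both the argument and the statement change. Since $\Ker(\Phi|_{\Sym^2(W_{n+1})})$ is then a sub-sum of these four pairwise non-isomorphic summands, injectivity is equivalent to $\Phi$ being nonzero on each of them. The trivial summand is easy: $\Phi$ of the invariant form $\sum_A S_A\odot S_A$ equals $\sum_A|\Im Z_{S_A}|^2$, which is positive at generic $(X,P)$. For the remaining three I would, using the explicit formula above, either exhibit for each summand one pair $(X,P)$ (for instance $X=e_0$, so that $Z_S$ sees only the first column of $S$) and an element on which the value is nonzero, or compute the $\Sp(n+1)$-invariant Gram operator $Q\mapsto\int\Phi(Q)^2$ and check that its scalar eigenvalue on each summand is nonzero.

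I expect this last step to be the main obstacle: verifying non-vanishing on the two ``large'' summands $V_{\varpi_4}$ and $V_{2\varpi_2}$ requires producing explicit highest-weight vectors in $\Sym^2(W_{n+1})$ and evaluating \eqref{eq:PhiSym}, or organizing the Gram-operator computation so that the four eigenvalues can be read off directly. The difficulty is one of bookkeeping rather than of concept, and it is precisely the formula for $\Psi_{X,P}$ that keeps it manageable.
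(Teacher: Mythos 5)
Your ``if'' direction and the reduction of the ``only if'' direction to the injectivity of $\Phi$ on $\Sym^2(W_{n+1})$ are both correct, and your closed-form expression $\sum_\a \<SJ_\a X,P\>\<S'J_\a X,P\>=\<\Im Z_S,\Im Z_{S'}\>$ is a genuinely useful reformulation that the paper does not use. Your strategy for injectivity is also different in kind from the paper's: you invoke the multiplicity-free decomposition $\Sym^2(W_{n+1})=R(2\pi_2)+R(\pi_4)+R(\pi_2)+1$ so that, by Schur's Lemma, the kernel of the equivariant map $\Phi|_{\Sym^2(W_{n+1})}$ must be a sub-sum of the four pairwise non-isomorphic summands, reducing injectivity to four non-vanishing checks. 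That reduction is sound.

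The genuine gap is that three of those four checks --- precisely the ones carrying the content of the lemma --- are announced but not performed. You verify non-vanishing only on the trivial summand; for $R(\pi_2)$, $R(2\pi_2)$ and, crucially, $R(\pi_4)$ (the summand that ultimately produces the indecomposable Killing tensors in Theorem~\ref{t:hpn}), you say you would ``either exhibit one pair $(X,P)$ and an element on which the value is nonzero, or compute the Gram operator,'' and you yourself flag this as the main obstacle. Until a concrete element of each summand (e.g.\ a highest-weight vector of $R(\pi_4)$ inside $\Sym^2(W_{n+1})$) is written down and \eqref{eq:PhiSym} is evaluated on it with a nonzero result, the lemma is not proved: a priori $\Phi$ could annihilate $R(\pi_4)$, in which case the whole construction of the paper would collapse. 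For comparison, the paper closes exactly this step by brute force: for $n=3$ it solves (with computer algebra) the linear system ``right-hand side of \eqref{eq:PhiSym} vanishes for all horizontal $P$'' and checks it is equivalent to the divisibility conditions \eqref{eq:coeffs}, and for $n\ge 4$ it reduces to the $n=3$ case by restricting $X,P$ to quaternionic coordinate subspaces $\bh^4(\cI)$ containing any given quadruple of indices. Your representation-theoretic framing would make for a cleaner proof than the paper's if you completed the evaluations, but as written the decisive computation is missing.
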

\begin{proof}
    In the quaternionic language, the module $V_{n+1}$ is the space of quaternionic Hermitian matrices acting on the module $\bh^{n+1}$ from the left (where $\Span(J_1, J_2, J_3)$ acts on $\bh^{n+1}$ by the right componentwise multiplication by imaginary quaternions). However, for the purposes of the proof it will be more convenient to work in the real settings.

    Choose a basis for $V_{n+1}$ as follows. In $\bh=\br^4$, with the basis $\{1, \ri, \mathrm{j}, \mathrm{k}\}$, we denote $L(u)$ and $R(u)$ the matrices of the left and of the right multiplication by the quaternion $u \in \bh$, respectively. We view $\br^{4n+4}$ as the direct, orthogonal sum $\oplus_{s=1}^{n+1} \br^4_s$, which gives the decomposition of the space of $4(n+1) \times 4(n+1)$ matrices into the corresponding $4 \times 4$ blocks. We choose a basis for $\br^{4n+4}$ by identifying each $\br^4_s$ with $\bh$ and choosing a basis $\{1, \ri, \mathrm{j}, \mathrm{k}\}$ in it. Relative to this basis, we can view the matrix of each of the operators $J_1, J_2$ and $J_3$ as the block-diagonal matrix with each $4 \times 4$ diagonal block being the matrix $R(\ri), R(\mathrm{j})$ and $R(\mathrm{k})$, respectively. Now for $1 \le i \le n+1$, we define $S_i$ to be the $4(n+1) \times 4(n+1)$ matrix having the identity matrix $I_4$ in the $i$-th diagonal $4 \times 4$ block, and zeros elsewhere. For $1 \le i < j \le n+1$, we define $S_{ij}(u),\, u \in \cq:=\{1, \ri, \mathrm{j}, \mathrm{k}\}$, to be the matrix containing $L(u)$ in the $(i,j)$-th $4 \times 4$ block, $L(u^*)$ in the $(j,i)$-th $4 \times 4$ block (where $u^*$ is the quaternion conjugate to $u$) and zeros elsewhere. Then the set of matrices $\cB=\{S_i,\, 1 \le i \le n+1\} \cup \{S_{ij}(u), \, 1 \le i < j \le n+1,\, u \in \cq\}$ is a basis for $V_{n+1}$. We will need two properties of this basis. First, for any subset $\cI \subset \{1, \dots, n+1\}$, of cardinality $\# \cI = m,\; 1 \le m \le n+1$, the subset of $\cB$ consisting of the matrices $S_k, S_{ij}(u)$, with $k, i, j \in \cI,\, i<j,\, u \in \cq$, forms a basis for the space $V_m$ constructed as above for the space $\bh^m(\cI) = \oplus_{s \in \cI} \br^4_s$. Second, for any two elements of $\cB$, there exists a subset  $\cI \subset \{1, \dots, n+1\}$, of cardinality $m \le 4$ such that the orthogonal complement to $\bh^m(\cI) = \oplus_{s \in \cI} \br^4_s$ lies in the common kernel of these two elements.

    For an element $S \in V_{n+1}$ with the decomposition $S=\sum_{p=1}^{n+1} a_p S_p + \sum_{i<j, u \in \cq} a_{i,j,u} S_{ij}(u)$ relative to the basis $\cB$ we have $\Tr S= \sum_{p=1}^{n+1} a_p$, and so for a quadratic form $Q$ on $V_{n+1}$ with the decomposition $Q=\sum_{p,q=1}^{n+1} b_{pq} S_p \odot S_q + \sum_{i<j, u \in \cq, p} c_{p,i,j,u} S_{ij}(u) \odot S_p + \sum_{i<j, k<l, u,v \in \cq} \mu_{i,j,u,k,l,v} S_{ij}(u) \odot S_{kl}(v)$ (where $b_{pq}=b_{qp}$ and $\mu_{i,j,u,k,l,v}= \mu_{k,l,v,i,j,u}$) the condition of the lemma saying that $Q$ is divisible by $\Tr$ means that for all $u,v \in \cq$ and all $i,j,k,l,p,q,r,s \in \{1, \dots, n+1\}$ with $i<j$ and $k<l$ we have
    \begin{equation}\label{eq:coeffs}
      \mu_{i,j,u,k,l,v} = 0,\quad c_{p,i,j,u}=c_{q,i,j,u}, \quad b_{pq}+b_{rs}=b_{ps}+b_{rq}.
    \end{equation}
    The condition that $Q$ belongs to the kernel of $\Phi$ means that the right-hand side of~\eqref{eq:PhiSym} is zero for all $P$ in the horizontal distribution of the Hopf fibration, that is, for all $X, P \in \br^{4n+4}$ such that $P \perp X, J_1X, J_2X, J_3X$.

    Now for the ``if" direction of the lemma, we assume that $Q$ is divisible by $\Tr$, and so $Q=(\sum_{p=1}^{n+1} S_p) \odot S'$, for some $S' \in V_{n+1}$. Then the right-hand side of~\eqref{eq:PhiSym} equals $\sum_{p=1}^{n+1} \sum_{\a=1}^{3} \<S_p J_\a X, P\>\<S' J_\a X, P\>= \sum_{\a=1}^{3} \<J_\a X, P\>\<S' J_\a X, P\>$, which is zero for all $X, P \in \br^{4n+4}$ with $P \perp X, J_1X,J_2X, J_3X$.

    For the ``only if" direction we first consider the case $n=3$. Then $\dim V_{n+1}=28, \, \dim \Sym^2(V_{n+1})=406$, and the fact that the right-hand side of~\eqref{eq:PhiSym} is zero for $P \perp X, J_1X,J_2X, J_3X$ gives a system of linear equations for the coefficients of $Q$. By a straightforward calculation, with the aid of computer algebra, one shows that it is equivalent to the system~\eqref{eq:coeffs}. Now assume $n \ge 4$. The fact that the right-hand side of~\eqref{eq:PhiSym} equals zero for all $P \perp X, J_1X,J_2X, J_3X$, implies that the same is true under an additional assumption that $X$ and $P$ are chosen from a subspace $\bh^m(\cI) = \oplus_{s \in \cI} \br^4_s$ for some $\cI \subset \{1,\dots, n+1\}$, with $\# \cI =m$. But for any quadruple of indices $(i,j,k,l), (i,j,p,q), (p,q,r,s)$ we can choose such a subset $\cI$ of cardinality $m = 4$ which contains that quadruple, and so the equations~\eqref{eq:coeffs} are satisfied by induction by $n \ge 3$.
  \end{proof}
  }

We have $V_{n+1}= \br I_{4n+4} \oplus W_{n+1}$, where $W_{n+1} = \Ker \Tr$. This gives the direct decomposition $\Sym^2(V_{n+1}) = \br (I_{4n+4} \odot I_{4n+4}) \oplus (I_{4n+4} \otimes W_{n+1}) \oplus (\Sym^2(W_{n+1}))$ into $\Sp(n+1)$-modules. Lemma~\ref{l:Phiinj} tell us that the restriction $\Psi$ of the homomorphism $\Phi: \Sym^2(V_{n+1}) \to \sK^2(\bh P^n)$ to $\Sym^2(W_{n+1})$ is injective.

For the complexification $\spg(n+1, \bc)$, the adjoint (complex) representation $\Ad$ is isomorphic to $R(2\pi_1)$, and so we have the decomposition $\Sym^2(\Ad)= R(4\pi_1) + R(2\pi_2) + R(\pi_2) + 1$ into irreducible $\spg(n+1, \bc)$-modules (in the notation of \cite{VO}), where $1$ is the $1$-dimensional trivial module. The complexified module $W_{n+1}$ is isomorphic to the irreducible representation $R(\pi_2)$, and we have the decomposition $\Sym^2(R(\pi_2)) = R(\pi_4) + R(2\pi_2)+R(\pi_2)+1$ for $n \ge 3$. As all the modules in these decompositions are of the orthogonal type (see e.g.~\cite[Theorem~11.3(c)]{Sam}), we have the same decompositions into irreducibles for the real representations of $\spg(n+1)$. But the module of decomposable quadratic Killing fields on $\bh P^n$ is $\Sym^2(\Ad)$, which does not contain the submodule $\Psi(R(\pi_4)) \subset \sK^2(\bh P^n)$ of the module $\Psi(\Sym^2(W_{n+1})) \subset \sK^2(\bh P^n)$. As $\Psi$ is injective, we obtain the module isomorphic to $R(\pi_4)$ (of dimension $\frac16(n - 2)(n + 1)(2n + 1)(2n + 3)$) of quadratic Killing fields on $\bh P^n$ all nonzero elements of which are indecomposable.

\section{Cayley projective plane. Proof of Theorem~\ref{t:op2}}
\label{s:op2}

Out of several equivalent descriptions of $\bo P^2$ we will work with the following one \cite{Freu,Har,Baez}. Denote $\bo$ the algebra of octonions with the conjugation ${}^*$ and the inner product $\<x,y\>=\frac12 (xy^*+yx^*)$ for $x,y \in \bo$.

Let $H_3(\bo)$ be the Albert algebra, the Jordan algebra of $3 \times 3$ Hermitian octonion matrices. Elements of $H_3(\bo)$ have the form
  \begin{equation} \label{eq:H3O}
    A=\left(
        \begin{array}{ccc}
          r_1 & x_3^* & x_2^* \\
          x_3 & r_2 & x_1 \\
          x_2 & x_1^* & r_3 \\
        \end{array}
      \right), \qquad x_1, x_2,x_3 \in \bo, \; r_1,r_2,r_3 \in \br,
  \end{equation}
with the Jordan multiplication given by $A \circ B = \frac12(AB+BA)$. The automorphism group of $H_3(\bo)$ is the exceptional Lie group $\Ff$. The action of $\Ff$ preserves the trace given by $\Tr(A)=r_1+r_2+r_3$, the squared norm given by $\|A\|^2=\Tr(A^2)$, and the determinant given by $\det(A)=r_1r_2r_3 + 2\Re(x_1x_2x_3) - r_1 \|x_1\|^2 - r_2 \|x_2\|^2 - r_3\|x_3\|^2$. The squared norm defines a positive definite inner product on $H_3(\bo)$.

The determinant defines a symmetric trilinear form $\Phi$ on $H_3(\bo)$ given by $\Phi(A_1,A_2,A_3)$ $=\frac16 \frac{\db^3}{\db t_1 \db t_2 \db t_3} \det(t_1 A_1 + t_2 A_2 + t_3 A_3)$, for $A_1,A_2,A_3 \in H_3(\bo)$ and $t_1,t_2,t_3 \in \br$. 

The Cayley projective plane $\bo P^2$ is the submanifold of $H_3(\bo)$ defined as follows:
  \begin{equation*}
    \bo P^2 = \{X \in H_3(\bo) \, | \, \Tr(X)=1,\, \Phi(A,X,X)=0, \; \text{for all } A \in H_3(\bo)\}.
  \end{equation*}
With the induced metric, $\bo P^2$ becomes a rank-one Riemannian symmetric space. The group $\Ff$ acts transitively on $\bo P^2$, so that $\bo P^2$ is a single orbit, which is the orbit of the element
  \begin{equation*}
        E=\left(
        \begin{array}{ccc}
          1 & 0 & 0 \\
          0 & 0 & 0 \\
          0 & 0 & 0 \\
        \end{array}
      \right) \in H_3(\bo).
  \end{equation*}
Moreover, any element of $\bo P^2$ can be represented as $(x,y,z)^t (x^*,y^*,z^*)$, where the octonions $x,y,z$ satisfy $\|x\|^2+\|y\|^2+\|z\|^2=1$ and associate, that is, $x(yz)=(xy)z$ \cite[Lemma~14.90, Theorem~14.99]{Har}. In particular, the tangent space to $\bo P^2$ at $E$ is given by
  \begin{equation} \label{eq:TEOP2}
        T_E\bo P^2=\{T(y,z)\, | \, y, z \in \bo\}, \quad \text{where } T(y,z)=\left(
        \begin{array}{ccc}
          0 & y^* & z^* \\
          y & 0 & 0 \\
          z & 0 & 0 \\
        \end{array}
      \right)
  \end{equation}
The group $\Ff$ acts transitively on the set of geodesics of $\bo P^2$, and any geodesic can be obtained by the action of an element of $\Ff$ from the geodesic
  \begin{equation} \label{eq:geod}
        \gamma(s)=\left(
        \begin{array}{ccc}
          (\cos^2 s)1 & (\cos s \sin s)1 & 0 \\
          (\cos s \sin s)1 & (\sin^2 s)1 & 0 \\
          0 & 0 & 0 \\
        \end{array}
      \right),
  \end{equation}
where $s \in \br$ is an arc length parameter.

Denote $V$ the hyperplane $\Tr A= 0$ in $H_3(\bo)$. As $\Ff$ preserves the trace, the hyperplane $V$ is a $26$-dimensional $\Ff$-module, which is the standard (the smallest nontrivial) real irreducible representation of $\Ff$.

For any $A \in V$, we define the quadratic tensor field $K_A$ on $\bo P^2$ as follows: for $X \in \bo P^2$ and $Y,Z \in T_X \bo P^2$, set $K_A(Y,Z)=\Phi(Y,Z,A)$. The mapping $V \ni A \mapsto K_A$ defines the structure of an $\Ff$-module on the space $\sL=\{K_A \, | \, A \in V\}$. By Schur's Lemma, this mapping is either an isomorphism, or $\sL=0$. But the latter is not the case: taking $Y=T(1,0), Z =T(0,1) \in T_E \bo P^2$ in the notation of~\eqref{eq:TEOP2} and $A \in V$ whose only nonzero elements are $A_{23}=A_{32}=1$ one easily sees that $K_A(Y,Z)=\frac13$. Therefore $\sL$ is an irreducible $26$-dimensional $\Ff$-module.

We claim that the quadratic tensor fields $K_A$ on $\bo P^2$ are Killing. We need to show that any $K_A$ takes constant values along any geodesic. It is sufficient to show that for any $A \in V$, the function $s \mapsto K_A(\dot{\gamma}(s),\dot{\gamma}(s))$ is constant, where $\gamma(s)$ is given by~\eqref{eq:geod}. Taking $A$ as in~\eqref{eq:H3O} with $\Tr A = 0$, a direct calculation shows that $K_A(\dot{\gamma}(s),\dot{\gamma}(s))= \frac16 \frac{\db^3}{\db t_1 \db t_2 \db t_3} \det((t_1+t_2)\dot{\gamma}(s)+t_3A)=-\frac13 r_3$.

We therefore obtain a $26$-dimensional (irreducible) $\Ff$-submodule $\sL \subset \sK(\bo P^2)$. To show that all its nonzero elements are indecomposable, we note that all the irreducible complex representations of the complexification of the algebra $\f_4$ are of the orthogonal type~\cite[Theorem~3.11(f)]{Sam}, and so the decomposition of the $\Ff$-module of decomposable quadratic Killing tensor fields into irreducible submodules is given by $\Sym^2(\Ad) = R(2\pi_4)+R(2\pi_1)+1$ (in the notation of~\cite{VO}), while $\sL$ is isomorphic to $R(\pi_1)$.

\end{document}